\newtheorem*{thm-plain}{Theorem}
\newtheorem{thm}{Theorem}[section]
\newtheorem{lem}[thm]{Lemma}
\newtheorem{prp}[thm]{Proposition}
\newtheorem{cor}[thm]{Corollary}
\newtheorem{fct}[thm]{Fact}
\newtheorem{conj}[thm]{Conjecture}
\newtheorem{ques}[thm]{Question}
\numberwithin{equation}{thm}
\theoremstyle{definition}
\newtheorem*{dfn-plain}{Definition}
\theoremstyle{remark}
\newtheorem{clm}[thm]{Claim}
\newtheorem{ntn}[thm]{Notation}
\newtheorem*{rem-plain}{Remark}
\DeclareMathOperator{\sing}{sing}
\DeclareMathOperator{\reg}{reg}
\DeclareMathOperator{\Pic}{Pic}
\DeclareMathOperator{\codim}{codim}
\DeclareMathOperator{\Exc}{Exc}
\def\rd#1.{\lfloor{#1}\rfloor}
\def\rp#1.{\lceil{#1}\rceil}
\newcommand{\Q}{\ensuremath{\mathbb Q}}
\newcommand{\R}{\mathbb R}
\newcommand{\C}{\mathbb C}
\newcommand{\A}{\mathbb A}
\renewcommand{\O}{\mathscr O}
\newcommand{\x}{\times}
\renewcommand{\phi}{\varphi}
\renewcommand{\theta}{\vartheta}
\newcommand{\id}{\mathrm{id}}
\newcommand{\minus}{\setminus}
\newcommand{\inj}{\hookrightarrow}
\newcommand{\ms}{\mathscr}
\newcommand{\tensor}{\otimes}
\newcommand{\sHom}{\mathscr H\!om}
\newcommand{\red}{\mathrm{red}}
\newcommand{\nklt}{\mathrm{nklt}}
\newcommand{\sL}{\mathscr{L}}
\newcommand{\sO}{\mathscr{O}}
\newcommand{\sQ}{\mathscr{Q}}
\newcommand{\sR}{\mathscr{R}}
\newcommand{\sT}{\mathscr{T}}
\DeclareMathOperator{\supp}{supp}
\newcommand{\eps}{\varepsilon}
\newcommand{\dif}{\mathrm d}
\newcommand{\Res}{\mathrm{Res}}             
\newcommand{\wt}{\widetilde}
\newcommand{\wh}{\widehat}
\newcommand{\bb}{B}
\newenvironment{sequation}{%
\setcounter{equation}{\value{thm}}%
\numberwithin{equation}{section}%
\begin{equation}%
}{%
\end{equation}%
\numberwithin{equation}{thm}%
\addtocounter{thm}{1}%
}
\numberwithin{equation}{thm}
\DeclareRobustCommand{\SkipTocEntry}[5]{}
\newcommand{\iref}[3]{\the\value{#1}.\the\value{#2}(\the\value{#3})}
\newcommand\factor[2]{\left. \raise 2pt\hbox{$#1$} \right/\hskip -2pt \raise -2pt\hbox{$#2$}}
\definecolor{forrest}{RGB}{81,133,49}
\definecolor{mydarkblue}{RGB}{10,92,153}
\begin{document}

\title[An optimal extension theorem for $1$-forms]{An optimal extension theorem for $1$-forms \\ and the Lipman-Zariski conjecture}
\author{Patrick Graf} %
\author{S\'andor J Kov\'acs} %
\address{PG: Lehrstuhl f\"ur Mathematik I, Universit\"at Bayreuth,
  95440 Bayreuth, Germany} %
\email{\href{mailto:patrick.graf@uni-bayreuth.de}{patrick.graf@uni-bayreuth.de}}
\address{ SJK: University of Washington, Department of Mathematics, Box 354350,
  Seattle, WA 98195-4350, USA} %
\email{\href{mailto:skovacs@uw.edu}{skovacs@uw.edu}} \date{\today} %
\thanks{The first named author was partially supported by the
  DFG-Forschergruppe 790 ``Classification of Algebraic Surfaces and Compact Complex
  Manifolds''.}  %
\thanks{The second named author was supported in part by NSF Grants DMS-0856185,
  DMS-1301888 and the Craig McKibben and Sarah Merner Endowed Professorship in
  Mathematics at the University of Washington.} %
\keywords{Singularities of the minimal model program, differential forms,
  Lipman-Zariski conjecture} %
\subjclass[2010]{14B05, 32S05}

\begin{abstract}
  Let $X$ be a normal variety. Assume that for some reduced divisor $D \subset X$,
  logarithmic $1$-forms
  defined on the snc locus of $(X, D)$ extend to a log resolution $\wt X \to X$
  as logarithmic differential forms.
  We prove that then the Lipman-Zariski conjecture holds for $X$.
  This result applies in particular if $X$ has log canonical singularities.

  Furthermore, we give an example of a $2$-form defined on the smooth locus of a three-dimensional log canonical pair $(X, \emptyset)$ which acquires a logarithmic pole along an exceptional divisor of discrepancy zero, thereby improving on a similar example of Greb, Kebekus, Kov\'acs and Peternell.
\end{abstract}

\maketitle

\tableofcontents

\section{Introduction}

\subsection{Main results}

The Lipman-Zariski conjecture \cite{Lip65} asserts the following.

\begin{conj}\label{conj:LZ} 
  Let $X$ be a complex variety such that the tangent sheaf $\sT_X := \sHom_{\O_X}(\Omega_X^1, \O_X)$
  is locally free. Then $X$ is smooth.
\end{conj}

Despite being almost 50 years old, this conjecture remains open in general. It is
therefore natural to consider special cases. The conjecture is known to hold in the
case that the singular locus of $X$ has codimension at least 3 by the work of Flenner
\cite{Fle88} and for complete intersections by the work of K\"allstr\"om
\cite{Kal11}.
Recently, J\"order \cite{Jor13} proved the conjecture under the assumption that $\sT_X$ locally has a basis consisting of \emph{commuting} vector fields.
Earlier work on Conjecture~\ref{conj:LZ} includes \cite{SS72},
\cite{Hoc75}, and \cite{SvS85}.

In a slightly different direction, one may consider varieties with only singularities
arising in the minimal model program. The minimal model program aims at the
birational classification of varieties, and it is well-known that even if one is
interested only in smooth varieties, running the mmp requires dealing with singular
models.  For us the most important classes will be \emph{klt} (Kawamata log terminal)
and \emph{log canonical}. Klt singularities form the largest class of singularities
where most of the mmp is known to work, while the class of log
canonical singularities is the largest class where the relevant notions of the mmp
make sense.  It is also a more stable class than that of klt singularities; log
canonical singularities and their non-normal versions play an important role in
compactifications of moduli spaces of canonically polarized varieties.  For the
precise definitions, we refer to \cite[Sec.~2.3]{KM98}.
Greb-Kebekus-Kov\'acs-Peternell \cite[Theorem 6.1]{GKKP11} showed Conjecture
\ref{conj:LZ} for klt
spaces.

In this paper, we prove the following new special case of Conjecture~\ref{conj:LZ}.

\begin{thm}[Lipman-Zariski conjecture given an extension theorem] \label{thm:LZ ext}
  Let $X$ be a normal complex variety, and let $\pi\!: \wt X \to X$ be a log
  resolution. Assume that the sheaf
  \[ \pi_* \Omega_{\wt X}^1 (\log \wt D) \] is reflexive for some snc divisor $\wt D
  \subset \wt X$.  Then the Lipman-Zariski conjecture holds for $X$, i.e., $\sT_X$
  being locally free implies that $X$ is smooth.
\end{thm}

Note that Lipman \cite[Thm.~3]{Lip65} proved that if $\sT_X$ is locally free, then
$X$ is normal. Hence the normality assumption in our theorem is not a real
restriction.

Also note that the reflexivity assumption in the theorem is equivalent to saying that logarithmic $1$-forms defined on the snc locus of $(X, \pi_* \wt D)$ extend to logarithmic $1$-forms on $(\wt X, \wt D)$ -- cf.~\cite[Rem.~1.5.2]{GKKP11}.

By \cite[Thm.~1.5]{GKKP11}, we have the following immediate corollary.

\begin{cor}[Lipman-Zariski conjecture for log canonical pairs] \label{cor:LZ lc}
  Let $(X, D)$ be a complex log canonical pair. Then the Lipman-Zariski conjecture holds for $X$.
\end{cor}

By the same method of proof as in Theorem~\ref{thm:LZ ext}, we obtain the following result.

\begin{thm}[Extension theorem for 1-forms on log canonical pairs] \label{thm:Ext of 1-forms}
  Let $(X, D)$ be a complex log canonical pair, and let $\pi\!: \wt X \to X$ be a log
  resolution of $(X, D)$.  Then the sheaf
  \[ 
  \pi_* \Omega_{\wt X}^1(\log \wt D) 
  \] 
  is reflexive, where $\wt D$ is any reduced divisor such that
  \[ \Exc(\pi) \wedge \pi^{-1}(\rd D.)
  \subseteq \supp \wt D \subseteq
  \pi^{-1}(\rd D.). \]
\end{thm}

Here $\rd D.$ denotes the coefficient-wise round-down of $D$. In our case, $\rd D.$ is simply the union of all components of $D$ that have coefficient one.
The expression $\Exc(\pi) \wedge \pi^{-1}(\rd D.)$ denotes the largest divisor contained in both $\Exc(\pi)$ and $\pi^{-1}(\rd D.)$.

In a similar fashion as above, the reflexivity assertion is equivalent to saying that any logarithmic $1$-form defined on the snc locus of $(X, \pi_* \wt D)$
can be extended to $\wt X$, possibly acquiring logarithmic poles along $\wt D$.

Theorem~\ref{thm:Ext of 1-forms} should be compared to the extension theorem \cite[Thm.~1.5]{GKKP11}.
There the conclusion is similar, with $\wt D$ replaced by $\wh D$, the
largest reduced divisor contained in $\pi^{-1}(\text{non-klt locus})$. (The non-klt
locus is the smallest closed subset $W \subset X$ such that $(X, D)$ is klt away
from $W$. Note that this contains $\rd D.$.) Theorem~\ref{thm:Ext of 1-forms} says
that~\cite[Thm.~1.5]{GKKP11} is not optimal: we allow logarithmic poles only along a smaller divisor. For example, if $D = \emptyset$ but $X$ is not klt, e.g.~if $X$ is a cone over an abelian variety, then $\wt D = 0$ while $\wh D$ is nonzero.

\subsection{Further results}

We show that Theorem~\ref{thm:Ext of 1-forms} in turn is optimal, both with
respect to the pole divisor $\wt D$ and with respect to the degree of the forms
considered. To be more precise, concerning the first point we prove the following.

\begin{thm}[{Optimality of Theorem~\ref{thm:Ext of 1-forms}}] \label{thm:opt}
  Let $(X, D)$ and $\pi$ be as in Theorem~\ref{thm:Ext of 1-forms},
  and let $\wt D = \pi^{-1}(\rd D.)$.
  Assume that one of the following holds:
  \begin{enumerate}
  \item\label{itm:opt-Qfact}   $X$ is \Q-factorial, or
  \item\label{itm:opt-surface} $\dim X = 2$.
  \end{enumerate}
  Then for any divisor $\bb$ such that $\pi^{-1}_* \rd D. \leq B \lneq \wt D$, the sheaf
  $\pi_* \Omega_{\wt X}^1(\log \bb)$ is \emph{not} reflexive.  
\end{thm}

As to the second point, we show that for resolutions of log canonical pairs, forms of higher degree may acquire logarithmic poles along exceptional divisors of discrepancy strictly greater than $-1$, even if the boundary divisor of the pair is empty. This improves upon an example given in \cite[Ex.~3.2]{GKKP11}. The precise statement is as follows.

\begin{thm}[Non-extension over klt places, cf.~Theorem~\ref{thm:klt non-ext 2}]
  \label{thm:klt non-ext} There exists a three-dimensional complex log canonical pair $(X,
  \emptyset)$ with empty boundary such that there is a divisor $E_0 \subset \wt X$ of
  discrepancy $0$ in some log resolution $\wt X \to X$, and a $2$-form on the smooth
  locus of $X$ that acquires a logarithmic pole along $E_0$ when pulled back to $\wt
  X$.
\end{thm}

\subsection{Overview of proofs}

The proofs of our main results are based on the two auxiliary Theorems~\ref{thm:drop-a-non-exceptional-div} and~\ref{thm:ext along exc}.
The purpose of these theorems is to shrink the divisor along which we allow pulled-back $1$-forms to acquire logarithmic poles.
Theorem~\ref{thm:drop-a-non-exceptional-div} deals with non-exceptional components, while Theorem~\ref{thm:ext along exc} handles the exceptional ones.

To prove Theorem~\ref{thm:LZ ext}, we first apply Theorem~\ref{thm:drop-a-non-exceptional-div} and then Theorem~\ref{thm:ext along exc} in order to shrink that pole divisor to zero. Then an argument going back to \cite[(1.6)]{SvS85} completes the proof.
To prove Theorem~\ref{thm:Ext of 1-forms}, we take~\cite[Thm.~16.1]{GKKP11} as our starting point and then apply Theorems~\ref{thm:drop-a-non-exceptional-div} and~\ref{thm:ext along exc}.

\subsection{Recent work by Druel}

In \cite[Thm.~1.1]{Dru13}, Druel has recently obtained Corollary~\ref{cor:LZ lc} by
an independent proof.
He employs a cutting-down procedure to reduce to the surface case, where the main work is done.  Note however
that this case is essentially already contained in \cite{SvS85}. To be more precise,
if $x \in X$ is a normal surface singularity with smooth locus $U$ and $\pi\!: \wt X
\to X$ is a log resolution with exceptional divisor $E$, then \cite[Cor.~1.4]{SvS85}
says that the map
\[ 
\factor{ H^0(U, \Omega_U^1) }{ H^0(\wt X, \Omega_{\wt X}^1) } \longrightarrow
\factor{ H^0(U, \Omega_U^2) }{ H^0(\wt X, \Omega_{\wt X}^2(\log E)) } 
\]
induced by differentiation is injective. If $x \in X$ is log canonical, then by
definition the right-hand side is zero, hence so is the left-hand side. This means
that all $1$-forms defined on $U$ extend to $\wt X$ without poles. Now the argument
given in \cite[(1.6)]{SvS85} shows that if $\sT_X$ is free, then $x \in X$ is in fact
smooth.

\subsection{Acknowledgements}
The authors would like to thank Daniel Greb, Clemens J\"order and Stefan Kebekus for
interesting discussions on the subject of this paper.

\subsection{Notation, definitions, and conventions}

Throughout this paper, we work over the field of complex numbers $\C$.

A \emph{pair} $(X, D)$ consists of a normal variety $X$ over $\C$ and an effective $\R$-Weil divisor $D$ on $X$.
  
Let $(X, D)$ be a pair and $x \in X$ a point. We say that $(X, D)$ is
\emph{snc at $x$} if there exists a Zariski-open neighbourhood $U \subseteq X$ of $x$ such that $U$ is smooth and $\supp D \cap U$ is either empty, or a divisor with simple normal crossings.  The pair $(X, D)$ is called an \emph{snc pair} or simply \emph{snc} if it is snc at every point of $X$.

For the definitions of \emph{klt} and \emph{log canonical} pairs, we refer to~\cite[Sec.~2.3]{KM98}.

Given a pair $(X, D)$, let $(X, D)_{\reg}$ denote the maximal open
subset of $X$ where $(X, D)$ is snc, and $(X, D)_{\sing}$ its complement,
with the induced reduced subscheme structure.

Let $(X, D)$ be a pair. A \emph{log resolution} of $(X, D)$ is
a proper birational morphism $\pi\!: \wt X \to X$ such that $\wt X$ is smooth,
both the pre-image $\pi^{-1}(\supp D)$ of $\supp D$ and the exceptional set
$E = \Exc(\pi)$ are of pure codimension one in $\wt X$, and $(\wt X, \wt D + E)$ is an snc pair where $\wt D = \pi^{-1}(\supp D)_{\red}$ is the reduced divisor supported on $\pi^{-1}(\supp D)$.

Let $D$ be a divisor on a normal variety, and let $D = \sum a_i D_i$ be its decomposition into irreducible components. The \emph{round-down} $\rd D.$ of $D$ is defined to be $\sum \rd a_i. D_i$, where $\rd a_i.$ is the largest integer less than or equal to $a_i$.

Let $D_1, D_2$ be divisors on a normal variety. Then $D_1 \vee D_2$ denotes the
smallest divisor that contains both $D_1$ and $D_2$, while $D_1 \wedge D_2$ denotes the largest divisor that is contained in both $D_1$ and $D_2$.

\section{Dropping non-exceptional divisors}

In this section we prove that if the extension theorem holds for a pair $(X, \Gamma + \Delta)$ where $\Delta$ is a reduced effective divisor, then it also holds for $(X, \Gamma)$. More precisely we prove the following.

\begin{thm}[Dropping non-exceptional divisors] \label{thm:drop-a-non-exceptional-div}
  Let $X$ be a normal variety and $\pi\!: \wt X \to X$ a log resolution of
  $X$. Assume that the sheaf
  \[ \pi_* \Omega_{\wt X}^1(\log \wt D) \]
  is reflexive for some snc divisor $\wt D$. Let $\Delta$ be a reduced effective
  divisor on $X$ such that $\supp \Delta \subset \pi_* \wt D$. Then the sheaf
  \[ \pi_* \Omega_{\wt X}^1(\log \wt B) \]
  is also reflexive, where $\wt B = \wt D - \pi^{-1}_* \Delta$.
\end{thm}

\begin{proof}
Notice that one may assume that $\Delta$ is irreducible and conclude the general case via replacing $\wt D$ by $\wt B$ and iterating the process for all irreducible components of $\Delta$. For simplicity let us denote $\pi^{-1}_* \Delta$ by $\wt \Delta$.
Consider the following short exact sequence given by the residue map, cf.~\cite[2.3(b)]{EV92}:
  \[
  0 \to \Omega_{\wt X}^1(\log \wt B) \to \Omega_{\wt X}^1(\log \wt D) \to
  \sO_{\wt\Delta} \to 0.
  \]
  Pushing this forward via $\pi$ gives
  \[
  0 \to \pi_* \Omega_{\wt X}^1(\log \wt B) \to \pi_*\Omega_{\wt X}^1(\log \wt D) \to
  \sQ \to 0,
  \]
  where $\sQ \subset \pi_* \sO_{\wt\Delta}$.
  In particular, $\sQ$ is supported on $\Delta$ and it is torsion-free as an $\sO_{\Delta}$-module. It follows that the only associated prime of $\sQ$ has height $1$. Then the statement follows from \cite[Cor.~1.5]{MR597077}.
\end{proof}

\section{Dropping certain exceptional divisors}

The aim of the present section is to show that if $\omega$ is a logarithmic $1$-form
on a smooth variety whose poles are contained in an exceptional divisor, then $\omega$ in
fact does not have any poles.

\begin{thm}[Dropping exceptional divisors] \label{thm:ext along exc}
Let $X$ be a normal variety and $\pi\!: \wt X \to X$ a log resolution. Let $E$ be a reduced $\pi$-exceptional divisor. Then the natural inclusion map
\[ H^0\big(\wt X, \Omega_{\wt X}^1\big) \inj H^0\big(\wt X, \Omega_{\wt X}^1(\log E)\big) \]
is an isomorphism.
Equivalently, the inclusion $\pi_* \Omega_{\wt X}^1 \subset \pi_* \Omega_{\wt X}^1(\log E)$ is an isomorphism of sheaves.
\end{thm}

Theorem~\ref{thm:ext along exc} is a consequence of the following two propositions.

\begin{prp}[{Theorem~\ref{thm:ext along exc} for isolated singularities}] \label{prp:ext along exc-pt}
Let $X$ be a normal variety and $\pi\!: \wt X \to X$ a log resolution. Let $E$ be a reduced divisor which is mapped to a single point by $\pi$. Then the natural inclusion map
\[ H^0\big(\wt X, \Omega_{\wt X}^1\big) \inj H^0\big(\wt X, \Omega_{\wt X}^1(\log E)\big) \]
is an isomorphism.
\end{prp}

\begin{prp} \label{prp:ext along exc-reduction}
  Proposition~{\rm \ref{prp:ext along exc-pt}} implies Theorem~{\rm \ref{thm:ext
      along exc}}.
\end{prp}

Proposition~\ref{prp:ext along exc-pt} was first observed by Wahl in the case of
surfaces, cf.~\cite[Lemma 1.3]{Wah85}.
Our proof of Proposition~\ref{prp:ext along exc-reduction} follows the lines of \cite[Section 7.D]{GKK10}.

\subsection{The first Chern class} 

We collect some well-known facts about the first Chern class of a line bundle.

\begin{ntn}[First Chern class]
  Let $X$ be a smooth variety and $\sL \in \Pic(X)$ a line bundle. The first Chern
  class $c_1(\sL) \in H^1(X, \Omega_X^1)$ is the image of $\sL$ under the map $\Pic(X) = H^1(X, \O_X^*) \to H^1(X, \Omega_X^1)$ induced by the map $\dif\log\!: \O_X^* \to \Omega_X^1$ that sends $f \mapsto f^{-1} \dif f$.
\end{ntn}

\begin{lem}[Connecting homomorphism of the residue sequence] \label{lem:Connecting homomorphism}
  Let $X$ be a smooth variety and $E \subset X$ an snc divisor, consisting of
  irreducible components $E_1, \dots, E_k$. Consider the short exact sequence
  \begin{sequation}\label{eqn:386}
    0 \to \Omega_X^1 \to \Omega_X^1(\log E) \to \bigoplus_{i=1}^k \O_{E_i} \to 0
  \end{sequation}%
  given by the residue map (cf.\ \cite[2.3(a)]{EV92}). The associated connecting
  homomorphism
  \[ 
  \delta\!: \bigoplus_{i=1}^k H^0(E_i, \O_{E_i}) \to H^1(X, \Omega_X^1) 
  \] 
  sends
  \[ 
  \mathbf 1_{E_i} \mapsto c_1(\O_X(E_i)), \quad 1 \le i \le k. 
  \] 
  Here $\mathbf 1_{E_i}$ denotes the function that is constant with value $1$ on
  $E_i$ and vanishes on the other components.
\end{lem}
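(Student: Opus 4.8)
The plan is to compute the connecting homomorphism $\delta$ explicitly in Čech cohomology with respect to a suitable open cover, and then to recognize the resulting cocycle as the standard $\dif\log$-representative of $c_1(\O_X(E_i))$. Since $\delta$ is additive in the summands of $\bigoplus_i \O_{E_i}$, it suffices to fix one index $i$ and show $\delta(\mathbf 1_{E_i}) = c_1(\O_X(E_i))$.

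First I would choose an affine open cover $\{U_\alpha\}$ of $X$ on which $E_i$ is principal, say cut out by a regular function $f_\alpha \in \O_X(U_\alpha)$. Because $E$ is snc, I may arrange that near a point of $E_i \cap E_j$ the function $f_\alpha$ is a local coordinate defining $E_i$; then $f_\alpha$ is a unit along each component $E_j$ with $j \ne i$. The logarithmic form $\omega_\alpha := \dif\log f_\alpha = f_\alpha^{-1}\,\dif f_\alpha$ is a section of $\Omega_X^1(\log E)$ over $U_\alpha$, and a direct residue calculation gives $\res_{E_j}(\omega_\alpha) = \delta_{ij}$ (the Kronecker delta) for all $j$. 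Thus $\{\omega_\alpha\}$ is a collection of local lifts of the global section $\mathbf 1_{E_i}$ through the residue map of \eqref{eqn:386}.

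By the definition of the connecting homomorphism, $\delta(\mathbf 1_{E_i})$ is represented by the Čech $1$-cocycle whose value on $U_\alpha \cap U_\beta$ is
\[ \omega_\alpha - \omega_\beta = \dif\log f_\alpha - \dif\log f_\beta = \dif\log\!\left(\tfrac{f_\alpha}{f_\beta}\right) = g_{\alpha\beta}^{-1}\,\dif g_{\alpha\beta}, \]
where $g_{\alpha\beta} := f_\alpha/f_\beta \in \O_X^*(U_\alpha \cap U_\beta)$. Since $f_\alpha$ and $f_\beta$ both define $E_i$ on the overlap, $\{g_{\alpha\beta}\}$ is precisely a cocycle of transition functions for the line bundle $\O_X(E_i) \in \Pic X = H^1(X, \O_X^*)$. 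Applying $\dif\log$ to this cocycle is, by the very definition of $c_1$, the map $H^1(X, \O_X^*) \to H^1(X, \Omega_X^1)$, so the class of $\{g_{\alpha\beta}^{-1}\,\dif g_{\alpha\beta}\}$ equals $c_1(\O_X(E_i))$, as claimed.

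The main point to handle with care is the bookkeeping of signs and normalizations: one must check that the residue of $\dif\log f$ along $\{f = 0\}$ is normalized to $+1$ in the convention of \cite[2.3.a)]{EV92}, and that the sign in the Čech coboundary defining $\delta$ agrees with the sign in the $\dif\log$ description of $c_1$. Once these conventions are fixed consistently, the two representing cocycles coincide exactly rather than merely up to sign. No deeper geometric input is needed; the entire content is this local identification of $\dif\log$ of a defining equation of $E_i$ with a residue-$1$ logarithmic lift.
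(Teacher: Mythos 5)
Your proposal is correct and follows essentially the same argument as the paper: compute $\delta$ via \v Cech cohomology on an affine cover where $E_i$ is principal, lift $\mathbf 1_{E_i}$ to the local sections $\dif\log f_\alpha$, and identify the resulting coboundary $\{\dif\log(f_\beta f_\alpha^{-1})\}$ with the $\dif\log$ of the transition cocycle defining $c_1(\O_X(E_i))$. Your explicit check that the residue of $\dif\log f_\alpha$ along $E_j$ is $\delta_{ij}$, and your attention to the sign conventions, are details the paper leaves implicit but add nothing essentially new.
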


\begin{proof}
This is well-known, and easy to prove by a \v Cech cohomology computation.
\end{proof}

For a proof of the following fact see \cite[Paragraph 17]{For81}.

\begin{fct}[Residue map on curves] \label{fct:Residue map}
  Let $C$ be a smooth projective curve. Then there is a canonically defined linear
  map $\Res\!: H^1(C, \Omega_C^1) \to \C$, which is an isomorphism. \qed
\end{fct}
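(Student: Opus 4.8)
The plan is to compute $H^1(C,\Omega_C^1)$ via the Dolbeault isomorphism and to realise $\Res$ as integration of a top-degree form. Recall that the sheaves $\sA^{p,q}$ of smooth $(p,q)$-forms, equipped with $\bar\partial$, provide a fine (hence acyclic) resolution of $\Omega_C^p$, so Dolbeault's theorem yields a canonical isomorphism $H^1(C,\Omega_C^1)\cong H^{1,1}_{\bar\partial}(C)$, the quotient of smooth $(1,1)$-forms by $\bar\partial\big(\sA^{1,0}(C)\big)$. Since $C$ has complex dimension one, every $(1,1)$-form is of top degree, so integration makes sense, and I would define
\[ \Res([\eta]) = \tfrac{1}{2\pi\i}\int_C \eta \]
for $\eta$ a smooth $(1,1)$-form representing a class.

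The first steps are routine. For well-definedness and linearity, observe that if $\eta=\bar\partial\alpha$ with $\alpha\in\sA^{1,0}(C)$, then $\partial\alpha$ is a $(2,0)$-form and hence vanishes on a curve, so $\bar\partial\alpha=\dif\alpha$; by Stokes' theorem $\int_C\dif\alpha=0$, and thus $\Res$ descends to cohomology and is clearly linear. For surjectivity, I would take a smooth $(1,1)$-form $\eta_0$ supported in a single coordinate chart with $\int_C\eta_0\ne0$ (for instance a bump multiple of $\tfrac{\i}{2}\,\dif z\wedge\dif\bar z$); its class then has nonzero residue.

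The main obstacle is injectivity, equivalently the assertion that $\dim_\C H^{1,1}_{\bar\partial}(C)=1$, or concretely that $\bar\partial\alpha=\eta$ is solvable in smooth $(1,0)$-forms precisely when $\int_C\eta=0$. This is where the compactness of $C$ enters and is the analytic heart of the statement. I would establish it by Hodge theory on the compact Riemann surface $C$: fixing a Hermitian metric, the space of harmonic $(1,1)$-forms is one-dimensional, spanned by a multiple of the volume form, and by the Hodge decomposition for the elliptic operator $\bar\partial\bar\partial^*+\bar\partial^*\bar\partial$ the orthogonal complement of the harmonic space inside $\sA^{1,1}(C)$ equals $\img\bar\partial$ (the $\bar\partial^*$-exact part vanishing, since $\sA^{1,2}(C)=0$ on a curve). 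As a $(1,1)$-form is $L^2$-orthogonal to the harmonic generator if and only if its integral over $C$ vanishes, this yields both the solvability criterion and $\dim H^{1,1}_{\bar\partial}(C)=1$. Alternatively, one may invoke Serre duality $H^1(C,\Omega_C^1)\cong H^0(C,\O_C)^\vee$ together with $H^0(C,\O_C)=\C$ for the dimension count, and identify $\Res$ with the Serre trace to recognise it as the canonical isomorphism.

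Finally I would record canonicity: the construction uses no choice of cover, and by the second paragraph $\Res$ is independent of the smooth representative, so the map is intrinsic to $C$. For the applications it is worth noting that, combined with Lemma \ref{lem:Connecting homomorphism}, this normalisation gives $\Res\big(c_1(\O_C(p))\big)=1$ for a point $p\in C$, so that $\Res\circ c_1$ computes the degree of a line bundle on $C$.
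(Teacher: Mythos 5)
Your proof is correct, and it is essentially the argument of the source the paper cites for this Fact (Forster, \S 17): there too, $\Res$ is defined through the Dolbeault isomorphism as $[\eta] \mapsto \tfrac{1}{2\pi\i}\int_C \eta$, and the isomorphism property comes from Serre duality on the compact Riemann surface $C$ --- equivalently, the Hodge-theoretic dimension count you give. The paper itself offers no argument beyond the citation, so your sketch matches the intended proof.
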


\begin{lem}[Residue and degree] \label{lem:Res circ c_1 = deg}
  If $\sL \in \Pic(C)$ is a line bundle on a smooth projective curve, then
  $\Res(c_1(\ms L)) = \deg \ms L$.
\end{lem} 

\begin{proof} 
  For $P \in C$ a point and $\ms L = \O_C(P)$, the claim is easily seen to be true
  from the description of $\Res$ given in \cite[Thm.~17.3]{For81}. By linearity, this
  is enough.
\end{proof}

\subsection{Proof of Proposition~\ref{prp:ext along exc-pt}} 

We may assume $X$ to be affine of dimension $\ge 2$. Let $E_1, \dots, E_k$ be the
irreducible components of $E$. Consider the short exact sequence \eqref{eqn:386},
\[ 0 \to \Omega_{\wt X}^1 \to \Omega_{\wt X}^1(\log E) \to \bigoplus_{i=1}^k \O_{E_i}
\to 0. \] By the corresponding long exact sequence, it suffices to show injectivity
of the induced map
\[ \delta\!: \bigoplus_{i=1}^k H^0(E_i, \O_{E_i}) \to H^1(\wt X, \Omega_{\wt X}^1).
\]

Note that we may assume $\pi$ to be a projective morphism and then $\wt X$ is quasi-projective.
After choosing a (locally closed) embedding of $\wt X$ into a projective space,
let $H \subset \wt X$ be the intersection of general hyperplanes $H_1, \dots, H_{\dim
  X - 2} \subset \wt X$. (If $X$ is a surface, then $H = \wt X$.) We formulate the
properties of $H$ in a separate lemma.

\begin{lem}\label{lem:Properties of H}
  Using the notation introduced above we have that $(H, E|_H)$ is an snc surface
  pair. Furthermore, for any $i$, $C_i := E_i|_H$ is irreducible (in particular,
  nonempty), and $\pi|_H$ is proper and birational onto its image.
\end{lem}

\begin{proof} 
  If $\dim X=2$, then there is nothing to prove, so we may assume that $\dim X\geq
  3$. In particular the intersection of two irreducible divisors on $\wt X$ is still positive dimensional and hence if one of them is ample, then the intersection is connected. We will use this fact below.

  We proceed inductively, cutting by one hyperplane at a time. First we cut by $H_1$.
  By Bertini's theorem $E + H_1$ is snc and $H_1$ is connected and hence irreducible
  by the above discussion. Hence $(H_1, E|_{H_1})$ is an snc pair, and the
  $E_i|_{H_1}$ are smooth and irreducible for all $i$ by Bertini again. It is clear
  that $\pi|_{H_1}$ is proper and since $H_1$ is general, $\pi|_{H_1}$ is birational.

  Now we are in the same situation as before cutting by $H_1$, so we may apply the
  same argument again and obtain the statement for $H_1 \cap H_2$. After finitely
  many steps, we arrive at $H$.
\end{proof} 

The image $\pi(H)$ need not be normal, but we may normalize it and get a birational
morphism $H \to \pi(H)^\nu$, which contracts all the $C_i$. Then negative
definiteness (\cite[Lemma 3.40]{KM98}) asserts that the intersection matrix $A :=
(C_i \cdot C_j)$ is invertible.

Recall that we need to show the injectivity of $\delta$. To this end, think of the
$C_i$ as smooth projective curves in $\wt X$, consider the restriction morphism
\[ r\!: H^1(\wt X, \Omega_{\wt X}^1) \to \bigoplus_{i=1}^k H^1(C_i, \Omega_{C_i}^1), \]
and observe that the composition
\[ r \circ \delta\!: \bigoplus\limits_{i=1}^k H^0(E_i, \O_{E_i})
  \to \bigoplus_{i=1}^k H^1(C_i, \Omega_{C_i}^1) \]
is an isomorphism:
On the left-hand side, choose the basis consisting of the functions $\mathbf 1_{E_i}$, and on each summand of $\bigoplus\limits_{i=1}^k H^1(C_i, \Omega_{C_i}^1)$, choose the basis canonically determined by the residue map of Fact~\ref{fct:Residue map}. By Lemmas \ref{lem:Connecting homomorphism} and \ref{lem:Res circ c_1 = deg} the map $r \circ \delta$ with respect to these bases is given by the matrix $A$. We have already noted that this matrix is invertible. \qed

\subsection{Proof of Proposition~\ref{prp:ext along exc-reduction}}

We will make essential use of the following proposition.

\begin{prp}[Negativity lemma, see {\cite[Proposition 7.5]{GKK10}}]\label{prp:GKK10,
    Proposition 7.5} 
  Let $\phi\!: \wt Y \to Y$ be a projective birational morphism between normal
  quasi-projective varieties of dimension $\ge 2$, where $\wt Y$ is smooth. Let $y
  \in Y$ be a point whose preimage $\phi^{-1}(y)$ has (not necessarily pure)
  codimension one and let $F_0, \dots, F_k \subset \phi^{-1}(y)$ be the reduced
  divisorial components. If all the $F_i$ are smooth and $\sum e_i F_i$ is a nonzero
  effective divisor, then there is a $0 \le j \le k$ such that $e_j \ne 0$ and
  $h^0(F_j, \ms O_{\wt Y}(\sum e_i F_i)|_{F_j}) = 0$. \qed
\end{prp}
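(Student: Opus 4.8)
The plan is to reduce to the case of surfaces by cutting $\tilde Y$ with general hyperplanes, and there to invoke the negative definiteness of the intersection form of a curve contracted by a birational morphism. Write $D := \sum_i k_i F_i$ and $n := \dim \tilde Y \ge 2$. I would argue by contradiction: suppose that for every index $j$ with $k_j \ne 0$ the group $H^0\big(F_j, \O_{\tilde Y}(D)|_{F_j}\big)$ contains a nonzero section $s_j$. Since each $F_j$ is smooth, hence integral, the zero locus $Z(s_j) \subsetneq F_j$ is a proper closed subset, of dimension at most $n-2$. There are only finitely many such sections, and I fix all of them before choosing any hyperplanes.

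Next I would cut down. Let $H = H_1 \cap \cdots \cap H_{n-2}$ be an intersection of $n-2$ general hyperplane sections of $\tilde Y$ (for $n = 2$ this step is vacuous and $H = \tilde Y$), and set $S := H$ and $C_i := F_i \cap S$. Exactly as in the proof of Lemma \ref{lem:Properties of H}, repeated application of Bertini's theorem shows that $S$ is a smooth surface, that each $C_i$ is a smooth irreducible projective curve, and that $\phi|_S$ is proper and birational onto its image. Because the $F_j$ are only finitely many and each $Z(s_j)$ has dimension strictly less than $\dim F_j$, I may additionally arrange that no $C_j$ is contained in $Z(s_j)$; hence $s_j|_{C_j}$ is a nonzero section of $\O_{\tilde Y}(D)|_{C_j}$.

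Now I would run the surface argument. The morphism $\phi|_S$ contracts all the $C_i$ to the single point $y$; normalizing the image and applying negative definiteness \cite[Lemma 3.40]{KM98}, the intersection matrix $A := (C_i \cdot C_j)_S$ is negative definite. Since $D|_S = \sum_i k_i C_i$ is a nonzero effective divisor, this gives $(D|_S)^2 = \sum_i k_i\,(D|_S \cdot C_i)_S < 0$; as all $k_i \ge 0$, there must be an index $j$ with $k_j > 0$ and $(D|_S \cdot C_j)_S < 0$. For this $j$ the line bundle $\O_{\tilde Y}(D)|_{C_j} = \O_S(D|_S)|_{C_j}$ has negative degree $(D|_S \cdot C_j)_S$ on the projective curve $C_j$, so it admits no nonzero global section. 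This contradicts $s_j|_{C_j} \ne 0$, completing the proof.

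The main obstacle is the cutting-down step: I must ensure simultaneously that the surface $S$ and all the curves $C_i$ are well behaved (smooth, irreducible, with $\phi|_S$ birational onto its image so that the negativity lemma applies) and that the finitely many fixed sections $s_j$ do not all vanish on the corresponding $C_j$. Framing the argument as a contradiction is precisely what makes this possible: it fixes the sections $s_j$ before the hyperplanes are chosen, so that the generality of $H$ can be used against the fixed proper closed sets $Z(s_j)$, thereby avoiding any circularity between the choice of $S$ and the index $j$ produced by negative definiteness.
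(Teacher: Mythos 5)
The paper itself offers no proof of this proposition --- it is quoted from \cite[Prop.~7.5]{GKK10} with a \qedsymbol\ --- but your argument is correct and follows essentially the same route as the original one and as the paper's own closely analogous Proposition~\ref{prp:Log poles along a divisor contracted to a point}: cut down by $\dim \tilde Y - 2$ general hyperplanes as in Lemma~\ref{lem:Properties of H}, contract the resulting curves to a point via the normalized image, and apply negative definiteness \cite[Lemma 3.40]{KM98} to produce an index $j$ with $k_j > 0$ and $\deg \O_{\tilde Y}(\textstyle\sum k_i F_i)|_{C_j} < 0$. Your one genuine point of care --- fixing the finitely many putative sections $s_j$ \emph{before} choosing the hyperplanes, so that generality forces $\dim (Z(s_j) \cap S) \le 0$ and hence $s_j|_{C_j} \ne 0$ --- correctly resolves the only delicate issue (the potential circularity between the choice of the surface and the index $j$), so the proof is complete.
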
 

\begin{proof}[Proof of Proposition~\ref{prp:ext along exc-reduction}] 
  We may assume $X$ to be affine of dimension $\ge 2$. Let
  \begin{sequation}\label{eqn:521}
    \sigma \in H^0 \big( \wt X, \Omega_{\wt X}^1(\log E) \big)
  \end{sequation}%
  be a logarithmic $1$-form. Assuming Proposition~\ref{prp:ext along exc-pt}, we will
  show that
  \begin{sequation}\label{eqn:525}
    \sigma \in H^0 \big( \wt X, \Omega_{\wt X}^1 \big).
  \end{sequation}%
  To this end, we will consider an irreducible component of $E' \subset E$ for which
  $\dim\pi(E')$ is maximal among the irreducible components of $E$, and for any such
  $E'$ we will show that $\sigma \in H^0 \big( \wt X, \Omega_{\wt X}^1(\log(E - E'))
  \big)$. Then replace $E$ by $E-E'$ and repeat the argument until $E$ disappears.

  We proceed by induction on pairs of numbers $\big( \!\dim X, \,\codim_X \pi(E') \big)$
  ordered lexicographically as indicated in the following table:

  \begin{table}[h]
    \centering
    \begin{tabular}{l|ccccccccccc} 
      No.                & 1 & 2 & 3 & 4 & 5 & 6 & 7 & 8 & 9 & 10 & $\cdots$ \\
      \hline 
      $\dim X$           & 2 & 3 & 3 & 4 & 4 & 4 & 5 & 5 & 5 &  5 & $\cdots$ \\ 
      $\codim_X \pi(E')$ & 2 & 2 & 3 & 2 & 3 & 4 & 2 & 3 & 4 &  5 & $\cdots$ \\ 
    \end{tabular} 
\end{table} 

In order to simplify notation, we number the irreducible components $E_i$ of $E$ such
that $E' = E_0$ and $\pi(E_i) = \pi(E_0)$ if and only if $0 \le i \le k$, for some
$k$. Let $e_i$ be the pole orders of $\sigma$ along the $E_i$. These are the minimal
non-negative numbers such that
\[ 
\begin{array}{c}
  \sigma \in H^0 \big( \wt X, \Omega_{\wt X}^1 \tensor \O_{\wt X}(\sum e_i E_i) \big).
\end{array} 
\]
By \eqref{eqn:521}, we already know all the $e_i$ are either 0 or 1, and our aim is
to show that $e_0 = 0$.

\emph{Start of induction.} This is the case $\dim X = \codim_X \pi(E_0) = 2$. For
surfaces, every exceptional divisor is contracted to a point, so Proposition
\ref{prp:ext along exc-pt} applies.

\emph{Inductive step.} We distinguish two possibilities: the divisor $E_0$ may be
mapped to a point by $\pi$, or it may be mapped to a positive-dimensional variety.

If $\dim \pi(E_0) = 0$, then by the choice of $E_0$, every exceptional divisor
contained in $E$ is contracted to a point, so Proposition~\ref{prp:ext along exc-pt}
applies again.

If $\dim \pi(E_0) > 0$, choose general hyperplanes $H_1, \dots, H_{\dim \pi(E_0)}
\subset X$, let $H$ be the intersection $H_1 \cap \cdots \cap H_{\dim \pi(E_0)}$ and
$\wt H$ the preimage $\pi^{-1}(H)$. Applying \cite[Lemmas 2.23 and 2.24]{GKKP11},
we obtain that $H$ is normal
and $\pi|_{\wt H}$ is a log resolution. The intersection $H \cap \pi(E_0)$ is finite, but nonempty. Shrinking $X$, we may assume that $H \cap \pi(E_0)$ consists of a
single point, say $x$. Now set $F_x = (\pi|_{\wt H})^{-1}(x)$ and
\[ F_{x,i} = F_x \cap E_i = (\pi|_{E_i})^{-1}(x). \] Then $F_x$ is the union of the
$F_{x,i}$.

\begin{clm} \label{clm:761}
The subsets $F_{x,0}, \dots, F_{x,k}$ are smooth, irreducible, and have codimension one in $\wt H$, while the other $F_{x,i}$ are empty.
\end{clm}

\begin{proof}
If $0 \le i \le k$, then being a general fibre of $\pi|_{E_i}$, $F_{x,i}$ is
smooth of dimension $\dim E_i - \dim \pi(E_0) = \dim \wt H - 1$. Since $F_{x,i} = \wt
H \cap E_i$, it is an ample divisor on $E_i$, hence connected and by being smooth it is also irreducible.
On the other hand, if $i > k$, then
by the choice of $E'=E_0$, we have that $\pi(E_0) \not\subset \pi(E_i)$,
and hence $x \not\in \pi(E_i)$ and so $F_{x,i} = \emptyset$.
\end{proof}

Claim~\ref{clm:761} implies that it is possible to apply Proposition~\ref{prp:GKK10, Proposition 7.5} to $\pi|_{\wt H}\!: \wt H \to H$, $x \in H$, and $F_{x,0}, \dots, F_{x,k}$, which we will do later.

Now consider the dual of the normal bundle sequence for $\wt H \subset \wt X$,
\[ \xymatrix{ 0 \ar[r] & N_{\wt H/\wt X}^* \ar[r] & \Omega_{\wt X}^1|_{\wt H}
  \ar[r]^\rho & \Omega_{\wt H}^1 \ar[r] & 0, } \] twist it with $\ms F := \O_{\wt
  H}(\sum e_i E_i|_{\wt H})$, and restrict to $F_{x,j}$, for $0 \le j \le k$:
\[ \xymatrix{ N_{\wt H/\wt X}^* \tensor \ms F \ar[r]^-\alpha \ar[d] & \Omega_{\wt
    X}^1|_{\wt H}
  \tensor \ms F \ar[r]^-\beta \ar[d]^{r_j} & \Omega_{\wt H}^1 \tensor \ms F \ar[d] \\
  N_{\wt H/\wt X}^* \tensor \ms F |_{F_{x,j}} \ar[r]^-{\alpha_j} & \Omega_{\wt
    X}^1|_{\wt H} \tensor \ms F \big|_{F_{x,j}} \ar[r]^-{\beta_j} & \Omega_{\wt H}^1
  \tensor \ms F |_{F_{x,j}}.  } \] Since $H$ has smaller dimension than $X$, the
induction hypothesis gives us that $\beta(\sigma|_{\wt H})$ has no poles, that is
\begin{sequation}\label{eqn:572}
  \beta(\sigma|_{\wt H}) \in H^0(\wt H, \Omega_{\wt H}^1) \subset H^0(\wt H,
  \Omega_{\wt H}^1 \tensor \ms F).
\end{sequation}%
Recall that we want to show that $e_0 = 0$. We will show more generally that $e_j =
0$ for all $0 \le j \le k$. So, assume to the contrary that there is an index $j$
with $e_j = 1$. By the definition of the $e_i$, $\sigma|_{\wt H}$ as a section in
$\Omega_{\wt X}^1|_{\wt H} \tensor \ms F$ does not vanish along $F_{x,j}$. But by
(\ref{eqn:572}), $\beta(\sigma|_{\wt H})$ does vanish along $F_{x,j}$. So
$r_j(\sigma|_{\wt H})$ is a nonzero global section in $\ker \beta_j$, which means
that $H^0(F_{x,j}, N_{\wt H/\wt X}^* \tensor \ms F|_{F_{x,j}}) \ne 0$.

Now note that $N_{\wt H/\wt X}|_{F_{x,j}}$ is trivial, because $N_{\wt H/\wt X}$ is
the pullback of $N_{H/X}$.  Hence from $H^0(F_{x,j}, N_{\wt H/\wt X}^* \tensor \ms
F|_{F_{x,j}}) \ne 0$ it follows that $H^0(F_{x,j}, \ms F|_{F_{x,j}}) \ne 0$. Since
this holds for all $j$ with $e_j = 1$, we have a contradiction to Proposition
\ref{prp:GKK10, Proposition 7.5}, showing in particular that $e_0 = 0$ and thus
completing the proof of Proposition~\ref{prp:ext along exc-reduction}.
\end{proof}

\section{Proof of \protect{Theorem~\ref{thm:LZ ext}}}

The aim of the present section is to prove Theorem~\ref{thm:LZ ext}. First, for the reader's convenience we recall some facts about resolutions of singularities.

\begin{lem}[Reflexivity is independent of the choice of resolution] \label{lem:refl indep}
Let $X$ be a normal variety such that $\pi_* \Omega_{\wt X}^1$ is reflexive for some resolution $\pi\!: \wt X \to X$. Then $\phi_* \Omega_{X'}^1$ is reflexive for any resolution $\phi\!: X' \to X$.
\end{lem}

\begin{proof}
Let $\psi\!: \wh X \to X$ be a resolution of $X$ that dominates both $\wt X$ and $X'$, i.e.~we have the following commutative diagram.
\[ \xymatrix{
& \wh X \ar[dl]_{\wh\pi} \ar[dr]^{\wh\phi} \ar[dd]^\psi & \\
\wt X \ar[dr]_\pi & & X' \ar[dl]^\phi \\
& X &
} \]
Since $\wt X$ and $X'$ are smooth, we have $\wh\pi_* \Omega_{\wh X}^1 = \Omega_{\wt X}^1$ and $\wh\phi_* \Omega_{\wh X}^1 = \Omega_{X'}^1$. We obtain
\[ \phi_* \Omega_{X'}^1 = \phi_* \wh\phi_* \Omega_{\wh X}^1 =
   \psi_* \Omega_{\wh X}^1 = \pi_* \wh\pi_* \Omega_{\wh X}^1 =
   \pi_* \Omega_{\wt X}^1. \]
Since $\pi_* \Omega_{\wt X}^1$ is reflexive by assumption, so is $\phi_* \Omega_{X'}^1$.
\end{proof}

The next theorem is a special case of~\cite[Cor.~4.7]{GKK10}.

\begin{thm}[Functorial resolutions] \label{thm:funct res}
Let $X$ be a normal variety. Then there exists a resolution $\phi\!: X' \to X$ with the property that $\phi_* \sT_{X'}$ is reflexive, i.e.~for any vector field $\xi$ on some open subset $U \subset X$, there is a vector field $\wt\xi$ on $\phi^{-1}(U)$ that agrees with $\xi$ wherever $\phi$ is an isomorphism.
\end{thm}

\begin{proof}[Sketch of proof]
It is a classical fact~\cite[Satz 3]{Kau65} that vector fields on $X$ are in one-to-one correspondence with \emph{local $\C$-actions} on $X$. Loosely speaking, a local $\C$-action is a $\C$-action such that $t \bullet z$ is only defined for sufficiently small values of $|t|$, dependent on $z$. For any local $\C$-action, the action map $\C \x X \supset U \to X$ is a smooth morphism.

By~\cite[Thm.~3.45]{Kol07}, there exists a \emph{resolution functor} $\sR$ which to any variety assigns a resolution in such a way that smooth morphisms between varieties can be lifted to the resolutions. The resolutions output by $\sR$ are called \emph{functorial resolutions}. Let $\phi\!: X' \to X$ be the functorial resolution of $X$. Applying the functor $\sR$ to the action map associated to a vector field $\xi$ on $X$, we obtain a diagram
\[ \xymatrix{
\C \x X' \ar[d]_{\id \x \phi} & U' \ar@{ il->}[l] \ar[d] \ar[r] & X' \ar[d]^\phi \\
\C \x X                     & U  \ar@{ il->}[l] \ar[r]        & X.
} \]
One then checks that the map $U' \to X'$ is a local $\C$-action, giving rise to a vector field $\wt\xi$ on $X'$ which extends $\xi$ as desired.

For a rigorous proof of Theorem~\ref{thm:funct res}, the reader may consult~\cite[Sec.~4]{GKK10}.
\end{proof}

\begin{proof}[Proof of Theorem~\ref{thm:LZ ext}]
By assumption, we have a log resolution $\pi\!: \wt X \to X$ such that
$\pi_* \Omega_{\wt X}^1(\log \wt D)$ is reflexive.
We may uniquely write $\wt D = \wt D_{\mathit{bir}} + E$, where $E$ is exceptional and no component of $\wt D_{\mathit{bir}}$ is exceptional.
By Theorem~\ref{thm:drop-a-non-exceptional-div}, $\pi_* \Omega_{\wt X}^1(\log E)$ is reflexive.
Then by Theorem~\ref{thm:ext along exc}, also $\pi_* \Omega_{\wt X}^1$ is reflexive.

Let $\phi\!: X' \to X$ be the functorial resolution from Theorem~\ref{thm:funct res},
so that vector fields on $X$ can be lifted to $X'$.
By Lemma~\ref{lem:refl indep}, $\phi_* \Omega_{X'}^1$ is reflexive.
Now the proof of \cite[Theorem 6.1]{GKKP11} applies verbatim to show that if $\sT_X$ is locally free, then $X$ is smooth.
\end{proof}

\section{Proof of Theorem~\ref{thm:Ext of 1-forms}}

By Theorem~\ref{thm:drop-a-non-exceptional-div} we may assume that
$\wt D = \pi^{-1}(\rd D.)_{\red}$. Let $E$ denote the exceptional locus of $\pi$.
Let $\wh D = \pi^{-1}_* \rd D. + E$. Note that $\wh D$ is obtained from $\wt D$ by adding finitely many irreducible $\pi$-exceptional divisors whose image via $\pi$ is not contained in $\rd D.$. By \cite[Thm.~16.1]{GKKP11}, we know that
$\pi_* \Omega_{\wt X}^1(\log \wh D)$
is reflexive. 
Let $E_1$ be an irreducible component of $\wh D$ that is not contained in $\wt D$.
As we observed above, this means that $\pi(E_1) \not\subset \rd D.$, so by localizing near the general point of $\pi(E_1)$, that is, by further shrinking $X$, we may assume that $\rd D. = \emptyset$.
In this case, $\wh D$ is $\pi$-exceptional, hence Theorem~\ref{thm:ext along exc} implies that $\pi_* \Omega_{\wt X}^1(\log \wh D - E_1)$ is reflexive.
We may iterate this process as long as $\wh D$ is larger than $\wt D$ and so the statement follows. \qed

\section{Optimality of Theorem~\ref{thm:Ext of 1-forms}}\label{sec:Optimality}

In this section, we show that extension of differential forms as in Theorem \ref{thm:Ext of 1-forms} fails in many cases if one shrinks the pole divisor $\wt D$ further, or if one considers forms of higher degree.

\subsection{Shrinking $\wt D$ further}

The aim of this subsection is to prove Theorem~\ref{thm:opt}.
First we need two lemmas.

\begin{lem}[Strictly logarithmic poles] \label{lem:624}
  Let $X$ be a smooth variety and $f \in \O_X(X)$ a regular function such that its
  reduced zero set, $D = \{ f = 0 \}_\red \subset X$, is a divisor with simple normal
  crossings. Then $\dif\log f \in H^0(X, \Omega_X^{1}(\log D))$, and $\dif\log f
  \not\in H^0(X, \Omega_X^{1}(\log B))$ for any reduced divisor $0 \le B < D$.
\end{lem}

\begin{proof}
  This follows directly from the definition of logarithmic differentials. 
\end{proof}

\begin{lem}[Non-reflexivity] \label{lem:628}
  Let $(X, \Sigma)$ be a pair, where $\Sigma$ is a reduced divisor, $\pi\!: \wt X \to
  X$ a log resolution of $(X, \Sigma)$, and $\wt D$ the largest reduced divisor
  contained in $\pi^{-1}(\Sigma)$. Let $E_0$ be an irreducible $\pi$-exceptional
  divisor that is mapped into an effective
  divisor $D$ whose support is contained in $\Sigma$ and which is \Q-Cartier at the
  general point of $\pi(E)$. Then the sheaf $\pi_* \Omega_{\wt X}^1(\log \bb)$ is
  not reflexive for any reduced divisor $\pi^{-1}_* \Sigma \le \bb \le
  \wt D - E_0$. %
\end{lem}

\begin{proof}
  By the assumptions, there is an open set $U \subset X$ with $\pi(E_0) \cap U \ne
  \emptyset$, and a function $f \in \O_X(U)$ cutting out some multiple of $D$. Set $g
  = \pi^* f \in \O_{\wt X}(\pi^{-1}(U))$. By Lemma~\ref{lem:624},
  \[ 
  \dif\log g \in H^0 \big( U \minus \pi(\Exc(\pi)), \pi_* \Omega_{\wt X}^1(\log \bb)
  \big) 
  \] 
  but
  \[ 
  \dif\log g \not\in H^0 \big( U, \pi_* \Omega_{\wt X}^1(\log \bb) \big). 
  \]
  So $\dif\log g$ cannot be extended over the codimension $\ge 2$ subset
  $\pi(\Exc(\pi))$. This implies that $\pi_* \Omega_{\wt X}^1(\log \bb)$ is not
  reflexive.
\end{proof}

\begin{proof}[Proof of Theorem~\ref{thm:opt}]
  Let $E_0$ be a component of $\wt D - \bb$. Then $E_0$ is $\pi$-exceptional and
  $\pi^{-1}_* D \le \bb \le \wt D - E_0$. If we are in case
  (\ref{thm:opt}.\ref{itm:opt-Qfact}), Lemma~\ref{lem:628} applies immediately. Hence
  we may assume we are in case (\ref{thm:opt}.\ref{itm:opt-surface}). Then $\pi(E_0)$
  is a point $p \in X$, and we have $p \in D \subset X$, because $E_0 \subset \wt D
  \subset \pi^{-1}(\rd D.)$ set-theoretically. We will show that $D$ is \Q-Cartier at $p$,
  then the claim follows from Lemma~\ref{lem:628}.

  By shrinking $X$ we may assume that $(X, D)$ is snc away from $p$. For $0 < \eps
  \le 1$, the pair $(X, (1-\eps)D)$ is numerically dlt \cite[Ntn.~4.1,
  Lem.~3.41]{KM98}. By \cite[Prop.~4.11]{KM98}, $X$ is \Q-factorial. In particular,
  $D$ is \Q-Cartier.
\end{proof}

\subsection{Other values of $p$}

The analogue of Theorem~\ref{thm:Ext of 1-forms} does not hold for $p$-forms with $p \ge 2$. Counterexamples may be obtained by taking a $p$-dimensional normal Gorenstein singularity $z \in Z$ which is log canonical but not klt (notice that this only exists if $p \ge 2$), and considering the product $X = Z \x \C^{n-p}$, for $n \ge p$ arbitrary.

Let $\sigma$ be a local generator for $\omega_Z$ and replace $Z$ with a neighbourhood of $z$ where $\sigma$ is everywhere defined.  Then $\mathrm{pr}_1^* \sigma\in
H^0(X_{\reg}, \Omega_{X_{\reg}}^p)$ will not be extendable without logarithmic poles on any resolution of singularities of $X$.

This way one obtains counterexamples to the analogue of Theorem~\ref{thm:Ext of
  1-forms} for any $p\geq 2$ in arbitrary dimension $n\geq p$.

\section{Non-Extension without poles over klt places} \label{sec:klt non-ext}

In this section, we consider a reduced log canonical pair $(X, D)$ and a log
resolution $\pi\!: \wt X \to X$ of $(X, D)$. Deviating slightly from our previous
notation, we let
\begin{equation*}
\begin{array}{rcl}
  E & = & \pi^{-1}_* D + \Exc(\pi), \\
  E_\nklt & = & \text{sum of all divisors in $E$ with discrepancy $-1$}, \\
  E_\nklt \vee \pi^{-1}(D) & $=$ & \text{sum of all divisors contained in $E_\nklt$
    or in $\pi^{-1}(D)$}, \\
  \wt D & = & \text{largest reduced divisor contained in $\pi^{-1}(\text{non-klt
      locus})$}. 
\end{array}
\end{equation*}
Then we obviously have
\[ E_\nklt \;\;\le\;\; E_\nklt \vee \pi^{-1}(D) \;\;\le\;\; \wt D \;\;\le\;\; E. \]

The extension theorem \cite[Thm.~1.5]{GKKP11} states that the sheaves $\pi_*
\Omega_{\wt X}^p (\log \wt D)$ are reflexive for all values of $p$. In
\cite[Section~3.B]{GKKP11}, it was observed that basically by the definition of
discrepancy, even the sheaf $\pi_* \Omega_{\wt X}^n (\log E_\nklt)$ is reflexive,
where $n = \dim X$. This leads to the following natural question.

\begin{ques}\label{ques:opt-ext}
  Are the sheaves $\pi_* \Omega_{\wt X}^p (\log E_\nklt)$ also reflexive when $p <
  n$?
\end{ques}

The answer turns out to be ``no''. However, in the counterexample given in
\cite[Ex.~3.2]{GKKP11}, $X$ is the quadric cone, $D$ consists of two rulings, and the
exceptional divisor where extension fails is contained in the preimage of $D$. This
means that \cite[Ex.~3.2]{GKKP11} does not answer the following refined version of
Question~\ref{ques:opt-ext}:

\begin{ques}\label{ques:opt-ext-refined} 
  Are the sheaves $\pi_* \Omega_{\wt X}^p \bigl( \log \bigl( E_\nklt \vee \pi^{-1}(D)
  \bigr) \bigr)$ reflexive when $p < n$?
\end{ques}

In this section, we will give an example showing that even this question has to be
answered negatively. First we need a lemma.

\begin{lem}[Cusp singularities]\label{lem:cusp sg} 
  There exists a log canonical Gorenstein surface singularity $0 \in S$ that has a
  log resolution $\wt S \to S$ containing two distinct exceptional curves $C_1, C_2$
  which both have discrepancy $-1$ and whose intersection is non-empty.
\end{lem}

\begin{proof}
  This follows from the classification of log canonical Gorenstein surface
  singularities \cite[Sec.~9]{Kaw88}. It is also possible to explicitly write down a
  hypersurface singularity with the desired property. Namely, consider the following
  polynomial in three variables:
  \[ f(x, y, z) = x^2(x+z) - y^2z + z^4. \] A tedious but routine calculation shows
  that the origin in $\C^3$ is an isolated singular point of $S = \{ f = 0 \} \subset
  \C^3$, so $0 \in S$ is a normal Gorenstein surface singularity. Furthermore,
  blowing up $0 \in X$ yields a resolution whose exceptional locus consists of a
  rational curve $C_1$ with a single node, and which has discrepancy $-1$. Blowing up
  that node, we obtain a log resolution containing an additional exceptional rational
  curve $C_2$, also of discrepancy $-1$, such that $C_2$ and the strict transform of
  $C_1$ meet in two points.
\end{proof}

The next theorem tells us that the answer to Question~\ref{ques:opt-ext-refined} is ``no''.

\begin{thm}[Non-extension over klt places, cf.~Theorem~\ref{thm:klt
    non-ext}]\label{thm:klt non-ext 2} 
  There exists a three-dimensional reduced log canonical pair $(X, D)$ such that
  using the notation introduced at the beginning of this section, the sheaf $\pi_*
  \Omega_{\wt X}^2 \bigl( \log \bigl( E_\nklt \vee \pi^{-1}(D) \bigr) \bigr)$ is not
  reflexive.
\end{thm}

\begin{proof}
  Let $0 \in S$ and $p \in C_1 \cap C_2 \subset \wt S$ be as in Lemma~\ref{lem:cusp
    sg}.  Take $X := S \x \A_{\C}^1$ and $D = \emptyset$. Then $X' := \wt S \x
  \A_{\C}^1 \to X$ is a log resolution of $(X, D)$. On $X'$, blow up a point of the
  form $(p, t)$ with $t \in \A_{\C}^1$ arbitrary to obtain $f\!: \wt X \to X'$.  This
  gives a log resolution $\pi\!: \wt X \to X$ of $(X, D)$. Denote by $E_0 \subset \wt
  X$ the exceptional divisor arising from blowing up the point $(p, t)$, and note
  that its discrepancy $a(E_0, X, D) = 0$ by \cite[Lemmata 2.29 and 2.30]{KM98}.
  Hence we have the following diagram.
  \[ 
  \xymatrix{%
    X = S \x \A_{\C}^1 \ar[d] & X' = \wt S \x \A_{\C}^1 \ar[l] & \wt X \supset E_0
    \ar[l]_-f
    \ar@/^1pc/[ll]^{\pi} \\
    S 
  } 
  \]

  In order to prove the claim, consider a local generator $\sigma$ of the canonical
  sheaf $\omega_S$ in a neighborhood $U$ of $0 \in S$, and denote its pullback to
  $X'$ and $\wt X$ by $\sigma'$ and $\wt \sigma$, respectively. Choose local
  coordinates $(u, v, w)$ on $X'$ and $(x, y, z)$ on $\wt X$ such that $f\!: \wt X
  \to X'$ is given by
  \[ 
  f(x, y, z) = (x, xy, xz) 
  \] 
  in these coordinates. Then $E_0 = \{ x = 0 \}$.  Furthermore, we may assume that
  the exceptional divisor of $X' \to X$ is given by the equation $vw = 0$. Then, up
  to a unit, we have
  \[ 
  \sigma' = \dif\log v \wedge \dif\log w 
  \]
  by construction, so
  \[ 
  \wt \sigma = f^* \sigma' = (\dif\log x + \dif\log y) \wedge (\dif\log x + \dif\log
  z). 
  \] 
  This shows that
  \[ 
  \wt \sigma \in H^0 \bigl( \pi^{-1}(U \x \A_{\C}^1), \, \Omega_{\wt X}^2 ( \log E )
  \bigr)
  \] 
  has a pole along $E_0 = \{ x = 0 \}$. However, since $a(E_0, X, D) = 0$ and $D =
  \emptyset$, we see that $E_0$ is not contained in $E_\nklt \vee \pi^{-1}(D)$. An
  argument similar to the proof of Lemma~\ref{lem:628} now yields that $\pi_*
  \Omega_{\wt X}^2 \bigl( \log \bigl( E_\nklt \vee \pi^{-1}(D) \bigr) \bigr)$ is not
  reflexive.
\end{proof}



\providecommand{\bysame}{\leavevmode\hbox to3em{\hrulefill}\thinspace}
\providecommand{\MR}{\relax\ifhmode\unskip\space\fi MR}
\providecommand{\MRhref}[2]{%
  \href{http://www.ams.org/mathscinet-getitem?mr=#1}{#2}
}
\providecommand{\href}[2]{#2}

\end{document}